\newtheorem{definition}{Definition}[section]
\newtheorem{thm}{Theorem}[section]
\newtheorem{prop}{Proposition}[section]
\newtheorem{cor}{Corollary}[section]
\theoremstyle{remark}
\newtheorem{remark}{Remark}[section]
\numberwithin{equation}{section}
\def\R{\mathbb{R}}
\def\R{\mathbb{R}}
\def\p{\partial}
\def\m{\mathfrak{m}}
\def\({\left(}
\def\){\right)}
\def\[{\left[}
\def\]{\right]}
\def\S{\Sigma}
\def\hM{\hat{M}}
\def\thM{\widetilde{M}}
\def\Sh{\Sigma_{_H}}
\def\gm{\hat{h}}
\def\th{\tilde{h}}
\def\tgm{\tilde{h}}
\def\bOm{\bar{\Omega}}
\newcommand{\be}{\begin{equation}}
\newcommand{\ee}{\end{equation}}
\newcommand{\bee}{\begin{equation*}}
\newcommand{\eee}{\end{equation*}}
\begin{document}

\title{On a Penrose-like inequality in dimensions less than eight}
   
\author[McCormick]{Stephen McCormick}
\address{Institutionen f\"{o}r Matematik, Kungliga Tekniska H\"{o}gskolan, 100 44 Stockholm, Sweden; and School of Science and Technology, University of New England, Armidale, NSW 2351, Australia.}
\email{stephen.mccormick@une.edu.au}

\author[Miao]{Pengzi Miao}
\address{Department of Mathematics, University of Miami, Coral Gables, FL 33146, USA.}
\email{pengzim@math.miami.edu}

\thanks{The first named author is grateful for support from the Knut and Alice Wallenberg Foundation.\\The second named author's research was partially supported by the Simons Foundation Collaboration Grant for Mathematicians \#281105.}


	
\begin{abstract}
On an asymptotically flat manifold $M^n$ with nonnegative scalar curvature,
with outer minimizing boundary $\Sigma$, we prove a Penrose-like inequality in 
dimensions $ n < 8$, under suitable assumptions on the mean curvature 
and the scalar curvature of $ \Sigma$.
\end{abstract}

\keywords{Scalar curvature, Riemannian Penrose inequality}

\maketitle 
 
\section{Introduction and statement of results} \label{sec-intro}

The Riemannian Penrose inequality is a fundamental inequality in mathematical general relativity. 
It gives  a lower bound for the total mass of an asymptotically flat manifold  $M$ with nonnegative scalar curvature, 
in terms of the area  of the boundary $ \p M$, 
provided $ \p M$ is a minimal hypersurface that is outer minimizing in $M$.  
In this paper, we prove a ``Penrose-like'' inequality in the case where $\p M$ is not a minimal surface. 
To state precisely, both the Riemannian Penrose inequality and our main theorems, 
we first review some definitions.

\begin{definition}
Let $ n \ge 3 $. A Riemannian manifold $M^n$ is called asymptotically flat (with one end)
if there exists a compact set $K $ such that $ M \setminus K $ is diffeomorphic to 
$ \R^n $ minus a ball such that, 
in the coordinate chart coming from the standard coordinates on $\R^n$, 
the metric $h$ on $M^n$  satisfies 
\be \label{eq-decay-conds}
 h_{ij} = \delta_{ij} + O ( | x |^{-p} ) , \ \p h_{ij}  =  O ( | x |^{ - p - 1}) , \  
\p \p h_{ij}  =  O ( | x |^{ - p - 2 } )
\ee 
for some  $ p > \frac{n-2}{2} $ 
and the scalar curvature $ R_h$ of $h$ satisfies $ R_h = O ( | x |^q ) $ for some $ q > n $.
Here $ \partial $ denotes the standard partial differentiation on $ \R^n$. 
\end{definition}

On an asymptotically flat manifold $ M^n$, the limit
\bee
\m = \lim_{ r \rightarrow \infty} \frac{1}{ 2 (n-1) \omega_{n-1} } \int_{ S_r }
(h_{ij,i} - h_{ii,j} ) \nu^j d \sigma 
\eee
exists and is known as the ADM mass (\cite{ADM}) of $M$. Here $ \omega_{n-1}$ 
is the area of the standard unit $(n-1)$-sphere in $ \R^n$,
$ S_r  = \{ x \ | \ | x | = r \}$,   $ \nu $ is the Euclidean  outward unit normal to $ S_r$, $ d \sigma$ is the Euclidean 
 area element on $ S_r$, and summation is implied over repeated indices. 
Under suitable conditions, it was proved by Bartnik \cite{Bartnik-86} and Chru\'sciel \cite{Chrusciel} independently that $\m$
 is a geometric invariant of $M$; 
in particular, the expression for $\m$ above is independent of coordinates satisfying \eqref{eq-decay-conds}.

\begin{definition}
Given an asymptotically flat manifold  $M$ with boundary $ \Sigma$, one says that 
$ \Sigma $ is  outer minimizing if it minimizes area among all hypersurfaces in $M$ that 
enclose $ \Sigma $.
\end{definition} 

\begin{thm}[Riemannian Penrose inequality in dimensions less than $8$] \label{thm-RPI}
Let $M^n$ be an asymptotically flat manifold with nonnegative scalar curvature, 
with boundary $\Sigma$, where $ 3 \le n \le 7$. 
Suppose $ \Sigma $ is a minimal hypersurface that is outer minimizing in $M$, then
\be  \label{eq-Penrose}
	\m \geq \frac12\( \frac{|\Sigma |}{\omega_{n-1}}  \)^{\frac{n-2}{n-1}},
\ee 
where $ \m $ is the ADM mass of $M$ and $ | \Sigma | $ is  the area of $ \Sigma $.  
Moreover, equality holds if and only if $M$ is isometric to a spatial Schwarzschild manifold outside its horizon. 
\end{thm}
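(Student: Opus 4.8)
The plan is to prove this via the conformal flow of metrics introduced by Bray for $n = 3$ and extended to all dimensions $3 \le n \le 7$ by Bray and Lee. (For $n = 3$ with connected $\Sigma$ one could instead invoke the inverse mean curvature flow of Huisken and Ilmanen together with the monotonicity of the Hawking mass, but that method does not directly cover disconnected horizons, so the conformal flow is the better route.)

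Starting from $(M^n, h)$, set $g_0 = h$ and build a one-parameter family $g_t = u_t^{\frac{4}{n-2}} g_0$ of conformally related metrics as follows. For each $t$, let $\Sigma_t$ be the outermost minimal area enclosure of $\Sigma$ in $(M, g_t)$, let $M_t$ be the region exterior to $\Sigma_t$, and let $v_t$ be the $g_t$-harmonic function on $M_t$ with $v_t = 0$ on $\Sigma_t$ and $v_t \to -1$ at infinity; then define $u_t$ by $u_0 \equiv 1$ and $\frac{\partial u_t}{\partial t} = v_t$, extended suitably across $\Sigma_t$. The main analytic burden of this approach --- and the place I expect the real difficulty --- is showing that the flow is well defined, that $u_t$ and $\Sigma_t$ vary in $t$ with adequate regularity, and in particular that $\Sigma_t$ remains a smooth embedded hypersurface. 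This last point is exactly where the hypothesis $n \le 7$ is used: outermost minimal (and area-minimizing) hypersurfaces are smooth in ambient dimension at most $7$ but may be singular starting at dimension $8$, as the Simons cone shows.

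Granting the flow, the inequality follows by monitoring three quantities along it. (i) \emph{Nonnegative scalar curvature}: the evolution of $u_t$ is arranged so that, via the conformal transformation law for scalar curvature together with $R_{g_0} \ge 0$ and the harmonicity of $v_t$, one has $R_{g_t} \ge 0$ for all $t$, with a favorable distributional sign across $\Sigma_t$. (ii) \emph{Constancy of horizon area}: one shows $|\Sigma_t|_{g_t}$ is independent of $t$, the point being that $v_t$ vanishes on $\Sigma_t$ so the first-order change of area is controlled, while the outer-minimizing property rules out any decrease. (iii) \emph{Monotonicity of the mass}: computing $\frac{d}{dt}\m(g_t)$ from the asymptotics of $u_t$ and $v_t$ shows $\m(g_t)$ is nonincreasing. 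Finally, as $t \to \infty$ the metrics $g_t$ converge, after normalization, to a spatial Schwarzschild metric whose horizon has area $|\Sigma|$ and whose mass is therefore $\frac12 (|\Sigma|/\omega_{n-1})^{\frac{n-2}{n-1}}$; combining with (ii) and (iii),
\[
\m(h) = \m(g_0) \ \ge \ \lim_{t\to\infty}\m(g_t) \ = \ \frac12\left(\frac{|\Sigma|}{\omega_{n-1}}\right)^{\frac{n-2}{n-1}},
\]
which is \eqref{eq-Penrose}.

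For rigidity, equality in \eqref{eq-Penrose} forces $\m(g_t)$ to be constant in $t$; inspecting the computation behind (iii), this forces the leading asymptotic coefficient of $v_t$ to vanish, hence $v_t \equiv 0$ and $u_t \equiv 1$, so $g_t \equiv g_0$, and comparison with the limiting metric identifies $(M^n, h)$ with a spatial Schwarzschild manifold outside its horizon. In short, the conceptual skeleton is brief, and essentially all the work is analytic: the existence, regularity, and convergence of the conformal flow, the exact preservation of horizon area, and the mass monotonicity --- all of which go through precisely in the range $3 \le n \le 7$ thanks to minimal hypersurface regularity.
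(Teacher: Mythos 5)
Your write-up follows exactly the route the paper itself relies on: the paper does not prove Theorem \ref{thm-RPI} at all, but quotes it, citing Huisken--Ilmanen and Bray for $n=3$ and Bray--Lee for $3\le n\le 7$, and your sketch is a summary of that same conformal-flow argument. So there is no divergence of method; the issue is that, as a proof, your text is an outline in which every substantive step is asserted rather than established. The existence, uniqueness and regularity of the flow, the constancy of $|\Sigma_t|_{g_t}$, the monotonicity of the mass, and the convergence (after rescaling) to Schwarzschild are precisely the content of Bray's and Bray--Lee's papers, and none of them is argued here; in the context of the present paper the honest move is simply to cite those results, as the authors do.

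Two points in your sketch are also misstated and would matter if you tried to carry it out. First, in Bray's construction the conformal factor is built over the \emph{fixed} background metric: $v_t$ is harmonic with respect to $g_0$ (not $g_t$) on the region exterior to $\Sigma_t$, with $v_t=0$ on $\Sigma_t$ and $v_t\to -e^{-t}$ at infinity; with your normalization $v_t\to -1$ one gets $u_t\to 1-t$ at infinity, so the conformal metrics degenerate at $t=1$ and the flow cannot be continued, let alone run to $t\to\infty$. Second, the rigidity clause is more delicate than ``$\m(g_t)$ constant forces $v_t\equiv 0$'': for $3<n\le 7$ Bray--Lee establish the equality case only under an additional spin hypothesis, so the ``if and only if'' statement as quoted (and as you sketch it) does not follow from their theorem without that caveat. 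None of this affects the rest of the paper, which uses only the inequality \eqref{eq-Penrose} and not the rigidity statement.
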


When $ n = 3$,  Huisken and Ilmanen \cite{H-I97, H-I01}  first proved Theorem \ref{thm-RPI} 
for the case that $ \Sigma$  is connected, using inverse mean curvature flow method, and later Bray \cite{Bray01} proved Theorem \ref{thm-RPI} for  the general case  
in which  $ \Sigma$ can have multiple components, 
using a conformal flow of metrics and the Riemannian positive mass theorem \cite{Schoen-Yau79, Witten81}.
For higher dimensions $ n < 8 $,  Bray and Lee  \cite{Bray-Lee} proved  Theorem \ref{thm-RPI} 
using Bray's  conformal flow method from \cite{Bray01}. 

In this paper, we apply Theorem \ref{thm-RPI} to prove a Penrose-like inequality 
for  manifolds  whose boundary is not  a minimal hypersurface. 

\begin{thm} \label{thm-main}
Let $M^n$ be  an asymptotically flat  manifold of dimension  $3\leq n\leq 7$ with nonnegative scalar curvature, 
with connected, outer minimizing boundary $\Sigma$.
Let $ g $ be the induced metric on $ \Sigma $ and $H$ be the mean curvature of $\Sigma$. 
Suppose $g$ and $H$ satisfy 
\be \label{eq-condition-H-R-1}
 \min_\Sigma R_g   >  \frac{n-2}{n-1} \max_\Sigma H^2 , 
 \ee
 where $ R_g$ is  the scalar curvature of $(\Sigma, g)$. 
Then
\be \label{eq-Penrose-like-1}
\m  \ge \frac12 \(\frac{| \Sigma |}{\omega_{n-1}}\)^{\frac{n-2}{n-1}} \left( 1 - 
\frac{n-2}{n-1} \frac{  \max_\Sigma H^2 }{ \min_\Sigma R_g } \right) , 
\ee
where $ \m $ is the ADM mass of $M$ and  $ | \Sigma | $ is the area of $ \Sigma $.
\end{thm}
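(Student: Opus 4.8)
The plan is to reduce Theorem~\ref{thm-main} to the Riemannian Penrose inequality (Theorem~\ref{thm-RPI}) by gluing a ``generalized Schwarzschild'' collar onto the inner side of $\Sigma$: this turns $\Sigma$ into an interior hypersurface and produces a new \emph{minimal} boundary whose area is exactly large enough to realize the right-hand side of \eqref{eq-Penrose-like-1}.

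Concretely, set $r_0=(|\Sigma|/\omega_{n-1})^{1/(n-1)}$, let $\hat g=r_0^{-2}g$ so that $(\Sigma,\hat g)$ has area $\omega_{n-1}$, and put $\rho_0=\min_\Sigma R_{\hat g}=r_0^2\min_\Sigma R_g$, which is positive by \eqref{eq-condition-H-R-1}. The hypothesis \eqref{eq-condition-H-R-1} also lets me define
\[
 s_H=r_0\left(1-\frac{n-2}{n-1}\,\frac{\max_\Sigma H^2}{\min_\Sigma R_g}\right)^{\frac{1}{n-2}}\in(0,r_0],
\]
with $s_H=r_0$ only when $\Sigma$ is already minimal (in which case the claim is immediate from Theorem~\ref{thm-RPI}). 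On $\Omega:=[s_H,r_0]\times\Sigma$ I would take the warped product metric $\bar h=\phi(s)^2\,ds^2+s^2\hat g$ with
\[
 \phi(s)^{-2}=\frac{\rho_0}{(n-1)(n-2)}\left(1-\frac{s_H^{\,n-2}}{s^{\,n-2}}\right),
\]
the $n$-dimensional Schwarzschild-type metric over the base $(\Sigma,\hat g)$ with ``horizon'' at $s=s_H$. The first part of the proof is then a sequence of direct computations: (i) the warped-product formula and the choice of $\phi$ give $R_{\bar h}=s^{-2}(R_{\hat g}-\rho_0)\ge 0$; (ii) a substitution of the form $s=s_H+c\,u^2$ shows $\bar h$ extends smoothly across $\{s=s_H\}$, so $\Sigma_H:=\{s=s_H\}\times\Sigma$ is totally geodesic, in particular a minimal hypersurface of area $\omega_{n-1}\,s_H^{\,n-1}$; (iii) the slice $\{s=r_0\}\times\Sigma$ has induced metric $r_0^2\hat g=g$ and constant mean curvature $\tfrac{n-1}{r_0}\phi(r_0)^{-1}=\sqrt{\max_\Sigma H^2}$ with respect to the normal $\phi^{-1}\partial_s$ pointing towards increasing $s$, the last equality being precisely the reason for the choice of $s_H$.

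Next I would glue: form $\thM=M\cup_\Sigma\Omega$ by identifying $\partial M=\Sigma$ with $\{r_0\}\times\Sigma$. By (iii) the induced metrics agree, so $\thM$ carries a Lipschitz metric, smooth away from $\Sigma$, asymptotically flat with the same end as $M$ — hence the same ADM mass $\m$ — whose boundary is the minimal hypersurface $\Sigma_H$. The distributional scalar curvature along the corner $\Sigma$ is proportional to $\sqrt{\max_\Sigma H^2}-H\ge|H|-H\ge 0$, so $\thM$ has nonnegative scalar curvature including across $\Sigma$; moreover the totally umbilic foliation $\{s\}\times\Sigma$ of $\Omega$ by hypersurfaces of strictly increasing area and nonnegative mean curvature, together with the hypothesis that $\Sigma$ is outer minimizing in $M$, shows that $\Sigma_H$ is outer minimizing in $\thM$. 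A standard mollification of the metric near $\Sigma$ followed by a small conformal correction (the argument used by Miao for the positive mass theorem with corners, which carries over for $3\le n\le 7$) then produces smooth asymptotically flat metrics on $\thM$ with nonnegative scalar curvature, ADM masses converging to $\m$, and outer minimizing boundaries converging to $\Sigma_H$. Applying Theorem~\ref{thm-RPI} to these and passing to the limit gives $\m\ge\tfrac12(|\Sigma_H|/\omega_{n-1})^{\frac{n-2}{n-1}}$, and since $|\Sigma_H|=|\Sigma|\left(1-\tfrac{n-2}{n-1}\tfrac{\max_\Sigma H^2}{\min_\Sigma R_g}\right)^{\frac{n-1}{n-2}}$ by construction, this is exactly \eqref{eq-Penrose-like-1}.

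The step I expect to be the main obstacle is this last reduction from the cornered manifold $\thM$ to the smooth manifolds to which Theorem~\ref{thm-RPI} applies: one must smooth the metric across $\Sigma$ so as to restore nonnegative scalar curvature while controlling the ADM mass, and simultaneously preserve, up to a vanishing error, both the minimality and the outer-minimizing property of the boundary — the latter resting on the mean-convex collar foliation and the outer-minimizing hypothesis on $\Sigma$. By contrast, the warped-product computations (i)--(iii), the smoothness at $\{s=s_H\}$, and the closing arithmetic are routine. The strict inequality in \eqref{eq-condition-H-R-1} enters in exactly two places: it guarantees $s_H>0$, and it gives a strictly positive jump of the mean curvature at the corner, which is what makes the smoothing step go through cleanly.
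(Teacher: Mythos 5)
Your construction is, up to a change of coordinates (your area-radius variable $s$ versus the paper's arc-length parameter), exactly the paper's inner collar of Corollary \ref{cor-collars} with $H_o=\max_\Sigma H$ — indeed $A_o^2=(n-1)(n-2)/\rho_0$ and $r_m=s_H$ — and your gluing, corner mean-curvature comparison, and outer-minimizing argument reproduce the paper's proof of Theorem \ref{thm-main}. The smoothing step you identify as the main obstacle is precisely Proposition \ref{prop-RPIwC}, which the paper establishes in the appendix by doubling across the minimal boundary and repeating Miao's corner argument, i.e.\ exactly the argument you invoke.
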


\begin{remark}
If $M^n$ is a spatial Schwarzschild manifold outside a rotationally symmetric sphere of positive mean curvature, 
then  equality   in \eqref{eq-Penrose-like-1} holds on $M^n$. 
\end{remark}

\begin{remark}
The assumption that $ \Sigma$ is outer minimizing implies  $H \ge 0 $ necessarily. 
\end{remark}

If  the mean curvature $H $  of $ \Sigma = \p M$ is  strictly positive, we have the following related but different result. 

\begin{thm}\label{thm-main-Delta}
Let $M^n$ be  an asymptotically flat  manifold of dimension  $3\leq n\leq 7$ 
with  nonnegative scalar curvature, with connected, outer minimizing boundary $\Sigma$. 
Let $ g $ be the induced metric on $ \Sigma $ and $H$ be the mean curvature of $\Sigma$.
Suppose $ H > 0 $ and 
\be \label{eq-R-delta-H0}
	R_g -2H\Delta H^{-1}-\frac{n-2}{n-1}H^2>0,
\ee 
where  $ R_g$  is the scalar curvature of $(\Sigma, g)$ and 
$ \Delta$ is the Laplace-Beltrami operator on $(\Sigma, g)$. 
Then
\be \label{eq-Penrose-like-theta}
	\m   \geq  \frac{1}{2}\( \frac{|\Sigma|}{\omega_{n-1}} \)^{\frac{n-2}{n-1}}\( 1-\theta \),
\ee 
where $ \theta \in (0,1)$ is the constant given by 
$$
\theta = \frac{n-2}{n-1}\,\max\limits_{\Sigma}\frac{H^2}{R_g -2H\Delta H^{-1}} .
$$
Here $ \m $ is the ADM mass of $ M$ and $ | \Sigma | $ is the area of $ \Sigma$.
\end{thm}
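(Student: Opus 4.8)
The plan is to deduce the inequality from the Riemannian Penrose inequality, Theorem~\ref{thm-RPI}, by attaching to $\Sigma=\p M$ a suitable collar that terminates in a minimal hypersurface --- the same strategy as for Theorem~\ref{thm-main}, but with a collar whose cross--sections carry non--constant mean curvature, so that the gluing matches $H$ pointwise rather than only at $\max_\Sigma H$. Concretely, I would take $N=[r_0,r_1]\times\Sigma$ with the ``doubly twisted'' metric
\bee
	\gamma=\varphi(x)^2\,dr^2+\phi(r)^2 g,\qquad \varphi:=\frac{c_0}{H},
\eee
where $c_0>0$ is a constant and $\phi=\phi(r)>0$ is to be chosen; $\varphi$ is well defined and smooth because $H>0$. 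The first point is the scalar curvature identity
\bee
	R_\gamma=\frac{1}{\phi^2}\Big(R_g-2H\,\Delta H^{-1}\Big)-\frac{1}{\varphi^2}\Big(2(n-1)\frac{\phi''}{\phi}+(n-1)(n-2)\frac{(\phi')^2}{\phi^2}\Big).
\eee
This is a direct (if lengthy) computation; it is cleanest to write $\gamma=\varphi^2\big(dr^2+(\phi/\varphi)^2 g\big)$, apply the $n$--dimensional conformal transformation law for the scalar curvature together with the standard formula for the scalar curvature of $dr^2+e^{2f}g$, and observe that the resulting $\Delta_g\log\varphi$ and $|\nabla_g\log\varphi|^2$ terms recombine exactly into $-2\,\Delta_g\varphi/\varphi=-2H\Delta H^{-1}$, so that precisely the quantity appearing in \eqref{eq-R-delta-H0} emerges.

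Next I would choose $\phi$ to solve the Schwarzschild--type ODE $(\phi')^2=A-B\phi^{-(n-2)}$ with the normalisation $\phi(r_1)=1$; then the bracket multiplying $1/\varphi^2$ reduces to $(n-1)(n-2)A\phi^{-2}$, and, writing $\mu:=\max_\Sigma \tfrac{H^2}{R_g-2H\Delta H^{-1}}$, one gets
\bee
	R_\gamma=\frac{1}{\phi^2}\Big(R_g-2H\Delta H^{-1}-\frac{(n-1)(n-2)A}{c_0^2}\,H^2\Big).
\eee
I would fix the constants by $(n-1)(n-2)A/c_0^2=1/\mu$ and $c_0=(n-1)\sqrt{A-B}$ (the latter forced, as explained next, by the mean--curvature matching). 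Hypothesis \eqref{eq-R-delta-H0} says exactly that $R_g-2H\Delta H^{-1}>\tfrac{n-2}{n-1}H^2>0$, hence $\mu<\tfrac{n-1}{n-2}$, $\theta=\tfrac{n-2}{n-1}\mu\in(0,1)$, and the constants satisfy $0<B<A$ with $B/A=1-\theta$; and since $1/\mu\le (R_g-2H\Delta H^{-1})/H^2$ pointwise, $R_\gamma\ge 0$ on $N$. The normalisation $\phi(r_1)=1$ makes the metric induced on $\{r_1\}\times\Sigma$ equal to $g$, so $N$ glues to $M$ along $\Sigma$; the mean curvature of $\{r_1\}\times\Sigma$ measured from the $N$ side, with respect to the normal pointing into $M$, is $(n-1)\phi'(r_1)/\varphi=c_0/\varphi=H$ (this pins down $c_0=(n-1)\phi'(r_1)=(n-1)\sqrt{A-B}$), so it agrees with the mean curvature of $\Sigma$ from the $M$ side --- the borderline case in which the resulting Lipschitz metric on $\hM=M\cup_\Sigma N$ can be mollified keeping $R\ge 0$, exactly as in the proof of Theorem~\ref{thm-main}. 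At the inner end, $(\phi')^2=A-B\phi^{-(n-2)}$ forces $\phi'(r_0)=0$ and $\phi(r_0)^{n-2}=B/A=1-\theta$; one checks $\phi$ is smooth and increasing on an interval $[r_0,r_1]$ of finite length, so $\Sigma_0:=\{r_0\}\times\Sigma$ is totally geodesic, in particular minimal, with area $\phi(r_0)^{n-1}|\Sigma|$.

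It remains to run the reduction. Forming $\hM=M\cup_\Sigma N$ --- if convenient after first inserting a short cylindrical collar $\varphi^2 dr^2+\phi(r_0)^2 g$, on which $R\ge 0$ is immediate --- I would verify that $\Sigma_0$ is outer minimising in $\hM$, using that $\Sigma$ is outer minimising in $M$ and that the cross--sectional areas increase along $N$; this is the step I expect to need the most care, and it is the same issue one meets in Theorem~\ref{thm-main}. Mollifying the corner along $\Sigma$ and passing to the limit, Theorem~\ref{thm-RPI} applied to $\hM$ gives
\bee
	\m\ge\frac12\Big(\frac{|\Sigma_0|}{\omega_{n-1}}\Big)^{\frac{n-2}{n-1}}=\frac12\,\phi(r_0)^{n-2}\Big(\frac{|\Sigma|}{\omega_{n-1}}\Big)^{\frac{n-2}{n-1}}=\frac12(1-\theta)\Big(\frac{|\Sigma|}{\omega_{n-1}}\Big)^{\frac{n-2}{n-1}},
\eee
the last equality being the elementary identity $B/A=1-\tfrac{n-2}{n-1}\mu$ coming from the two relations that fix the constants; this is \eqref{eq-Penrose-like-theta}. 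Thus the main obstacle, besides the (mechanical) scalar curvature computation, is the standard-but-delicate package of corner mollification and the outer--minimising property of $\Sigma_0$, for which I would invoke the arguments already developed for Theorem~\ref{thm-main}.
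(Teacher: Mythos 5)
Your proposal is correct and takes essentially the same route as the paper: your collar $\gamma=(c_0/H)^2dr^2+\phi(r)^2g$ with $(\phi')^2=A-B\phi^{-(n-2)}$ and $B/A=1-\theta$ is, up to reparametrization and scaling, exactly the inner collar of Proposition \ref{prop-inner-collar-2} (there written with lapse $A(x)=\frac{n-1}{Hr_o}\sqrt{1-2m/r_o^{n-2}}$ and mass parameter $m=\frac{r_o^{n-2}}{2}(1-\theta)$), glued to $M$ along $\Sigma$ with matching mean curvatures and finished by the corner version of the Penrose inequality. The only imprecision is the phrase that the corner can be mollified ``keeping $R\ge0$'': in the paper's Proposition \ref{prop-RPIwC} (Appendix A, following Miao) the smoothing first produces controlled, possibly negative scalar curvature which is then removed by a conformal deformation, and the minimal, outer minimizing boundary is preserved in the limit via doubling and outermost minimal area enclosures --- but since you explicitly defer to that machinery, this is a matter of wording rather than a gap.
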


\begin{remark}
Both Theorem \ref{thm-main} and Theorem \ref{thm-main-Delta} are applicable to manifolds whose boundary  
is a constant mean curvature (CMC) hypersurface satisfying 
\bee
	R_g  - \frac{n-2}{n-1}H_o^2 >0,
\eee
where $ H = H_o >0 $  is a  constant.
In this case, both theorems coincide, and \eqref{eq-Penrose-like-1} and \eqref{eq-Penrose-like-theta} become
\bee
\m   \geq  \frac{1}{2}\( \frac{|\Sigma|}{\omega_{n-1}} \)^{\frac{n-2}{n-1}}\( 1-   \frac{n-2}{n-1} 
\, \frac{H_o^2}{ \min_\Sigma R_g }  \).
\eee
\end{remark}

\begin{remark} \label{rem-hawking-inequality}
If  $n=3$, 
the right  side of  \eqref{eq-Penrose-like-theta}  can be compared to the Hawking mass of the $2$-surface $\Sigma$ as follows.
Let $ d \sigma $ be the induced area element on $ \Sigma$.
By \eqref{eq-R-delta-H0},  
\begin{align*}
\frac{1}{16 \pi} \int_\Sigma H^2\, d\sigma & = 
\frac{1}{16 \pi}  \int_\Sigma\frac{H^2}{R_g -2H\Delta H^{-1}}  (R_g -2H\Delta H^{-1})  \,d\sigma\\
& \leq  \frac{1}{16 \pi}  
 \max\limits_\Sigma\frac{H^2}{R_g-2H\Delta H^{-1}}  \int_\Sigma \left(R_g -\frac{2}{H^2} | \nabla H |^2 \right) \,d\sigma \\
& \le \frac{1}{2}   \max\limits_\Sigma\frac{H^2}{(R-2H\Delta H^{-1})} = \theta  ,  
\end{align*}
where the last  inequality follows by the Gauss-Bonnet Theorem. Thus, 
when $\Sigma$ is a $2$-surface, 
\bee
\frac{1}{2}\( \frac{|\Sigma|}{\omega_{2}} \)^{\frac{1}{2}}\( 1-\theta \) = \sqrt{  \frac{|\Sigma|}{16 \pi } }\( 1-\theta \) 
\le \m_{_H} (\Sigma),
\eee
where
\bee
 \m_{_H} (\Sigma) = \sqrt{  \frac{|\Sigma|}{16 \pi } }\( 1- \frac{1}{16 \pi} \int_\Sigma H^2  d \sigma \) 
\eee
is the Hawking quasi-local mass \cite{Hawking} of $\Sigma$ in $M^3$. 
In this case, a conclusion
$ \m \ge  \m_{_H} (\Sigma) $,
which is stronger than \eqref{eq-Penrose-like-theta},
would follow from the weak inverse mean curvature flow argument in \cite{H-I01} 
provided $M^3$ satisfies certain topological assumptions that guarantee
the solution to the weak inverse mean curvature flow starting from $ \Sigma$ remains connected. 
A similar remark also applies to Theorem \ref{thm-main} in dimension $n=3$.
\end{remark}

\begin{remark}
The connectedness assumption of $ \Sigma = \partial  M^n $ in both  Theorems \ref{thm-main} and \ref{thm-main-Delta}
is not essential. It is assumed here only for the  simpleness  of the statement of results. See
Theorems  \ref{thm-main-multi-b} and \ref{thm-main-Delta-multi-b} for the case in which $\Sigma$ has multiple components. 
\end{remark}

We now explain  the idea of the proof of Theorems \ref{thm-main} and \ref{thm-main-Delta}, which is largely inspired by the method of  Mantoulidis and Schoen in \cite{M-S}. 
Given a suitable metric $g $ on the $2$-sphere $ S^2$, Mantoulidis and Schoen 
constructed a collar extension of $(S^2, g)$ (cf. \cite[Lemma 1.3]{M-S}), 
which is a metric $\gamma$ on the cylinder $ T =  [0, 1] \times S^2$ satisfying the conditions: 
$ \gamma$ has nonnegative scalar curvature, the induced metric 
on $ \Sigma_0 = \{ 0 \} \times S^2$ agrees with $ g $, 
$ \Sigma_0 $ is a minimal surface,  the induced metric 
on  $ \Sigma_t = \{ t \} \times S^2 $ gets  transformed  into a round metric as $t $ increases  while 
the area of $\Sigma_t $ expands and the mean curvature of $ \Sigma_t$ becomes positive in a controlled fashion. 
For this reason, we would like to call such an extension $(T, \gamma)$  an ``outer collar extension" of $(S^2, g)$. 
Given such an  outer collar extension  $(T, \gamma)$, Mantoulidis and Schoen smoothly attach a 
suitable spatial Schwarzschild manifold to  $(T, \gamma)$ at  $ \Sigma_1$ to obtain an asymptotically  flat  manifold 
which has  desired geometry  near infinity while having  an outer most horizon boundary that is isometric to $(S^2, g)$.   
Under an assumption that $g$ has positive Gauss curvature, 
a similar outer collar extension of $(S^2, g)$, but  with the minimal surface condition replaced by a CMC condition, 
is given in \cite{M-X}.

In contrast to  the use of an outer collar extension as in the above work, 
we make use of  an ``inner collar extension" in the present paper.
More precisely, given the asymptotically flat manifold $(M, g)$ in 
Theorems \ref{thm-main} and \ref{thm-main-Delta}, we construct  
a metric $\gamma$ on a cylinder $ T = [0, b] \times \Sigma$ for some $b>0$ 
satisfying the conditions: $ \gamma$ has nonnegative scalar curvature, the induced metric on 
$ \Sigma_0 = \{ 0 \} \times \Sigma $ agrees with $ g $, 
the mean curvature of $ \Sigma_0$ in $(T, \gamma)$ with respect to the outward normal 
agrees with (or is greater than) $H$, and the area of  $ \Sigma_s = \{ s \} \times \Sigma $ decreases as $s$ 
increases such that the other end $ \Sigma_b$ becomes a minimal hypersurface 
with controlled area. We then attach  this 
 $(T, \gamma)$ to the given
manifold $M$ along the boundary component  $\Sigma_0 = \Sigma$ to obtain an asymptotically flat 
manifold $\hat M$  with an outer minimizing minimal hypersurface boundary $ \Sigma_b$. The metric on $\hat M$
may not be smooth across $\Sigma$, but the mean curvature conditions on the two sides of $\Sigma$ in $\hat M$
guarantee that the Riemannian Penrose inequality can still be applied to $\hat M$ (cf. \cite{Miao09}), which 
gives the proof of Theorems \ref{thm-main} and Theorems \ref{thm-main-Delta}. 
In particular, the quantities on the right-hand side of \eqref{eq-Penrose-like-1} and \eqref{eq-Penrose-like-theta} 
are simply determined by the area of $\Sigma_b$ in $(T, \gamma)$. 

We note that this idea of constructing an inner collar extension, with one end being a minimal hypersurface, 
to extend the non-minimal boundary of an asymptotically flat manifold is in spirit similar to the idea behind 
Bray's inner mass definition \cite{Bray01}.

This paper is organized as follows.  In Section \ref{sec-collars}, we construct a suitable inner collar 
extension of  the boundary data described by a triple $(\Sigma, g, H)$. 
In Section \ref{sec-proof},  we prove  Theorems \ref{thm-main} and Theorems \ref{thm-main-Delta} 
by attaching the inner collar to the given manifold $M$  and applying the Riemannian Penrose  inequality.
 
\section{An inner collar extension}  \label{sec-collars}

In this section, we use  a triple $(\Sigma^{n-1}, g, H)$ to denote 
a connected, closed manifold $ \Sigma$ of dimension $ n-1$,
 a Riemannian metric $g$  on $ \Sigma$, and  a  positive function  $H  $ on $\Sigma$.
We also let $r_o$ be the area  radius of $(\Sigma, g)$, defined  by
$$ r_o=\( \frac{|\Sigma|_g}{\omega_{n-1}} \)^{\frac{1}{n-1}}.$$

Similar to the outer collar extension constructed  in \cite{M-X}, 
we construct an inner collar extension for $(\Sigma^{n-1}, g, H)$ as follows.
Given any $  m \in \left(0,  \frac{1}{2} r^{n-2}_o  \right)$, consider the 
$n$-dimensional,  spatial Schwarzschild manifold 
\bee
\left(M^S_m, \gamma_m \right)  = \left( (r_m , \infty) \times S^{n-1}, \frac{1}{1 - \frac{2 m}{r^{n-2}} } d r^2 + r^2 g_*  \right),
\eee
where $r_m=(2m)^{1/(n-2)}$ and $ g_*$ denotes the standard metric on $ S^{n-1}$ with volume $ \omega_{n-1}$.
Making a change of variable $  s = \int_{r_m}^r \left( 1 - \frac{2m}{r^{n-2}} \right)^{-\frac12} d r$, 
 we can write the Schwarzschild manifold as
\be \label{part-metric}
\left(M^S_m, \gamma_m \right)  =  \left( [0, \infty) \times S^{n-1}, d s^2 +  u_m^2 (s) g_* \right) , 
\ee
where   $u_m (s) $ satisfies $u_{m}(0)=r_m$,

\be \label{eq-um-p}
u_{m}'(s)=\sqrt{1-\dfrac{2 m }{u_{m}(s)^{n-2}}} \ \ 
\mathrm{and} \ \ 
u_{m}''(s)=(n-2)\dfrac{ m }{u_{m}(s)^{n-1}} . 
\ee 
Since $ r_m < r_o $, there exists $ s_o > 0 $  such that $  u_m (s_o ) = r_o $.
In what follows, we will use the finite region  in $(M^S_m , \gamma_m)$ that is bounded by  the horizon $\{ s = 0 \}$ 
and the  sphere $\{ s = s_o\}$, as a model to construct an  inner collar extension of $(\Sigma^{n-1}, g, H)$. 

For $ s \in [0, s_o ]$, let 
\be \label{eq-vms}
v_m(s) =  u_m ( s_o - s  ) .
\ee
Given any smooth positive function $A(x)$ on $\Sigma^{n-1}$, define the metric
\be 
\gamma_A=A(x)^2ds^2+r_o^{-2}v_m(s)^2g
\ee
on  the product $ \Sigma^{n-1}\times[0,s_o] $. For each $s \in [0, s_o]$, the mean curvature $H(s)$ 
of $ \Sigma_s = \{ s \} \times \Sigma $
 with respect to $ \nu = -\partial_s$ is given by
\begin{align}
H_s(x) &=\frac{n-1}{A(x)v_m(s)}\sqrt{1-\frac{2m}{v_m(s)^{n-2}}}  \label{eq-Hx-defn}
\end{align}
by \eqref{eq-um-p} and \eqref{eq-vms}. 
In particular, at   $s = s_o$, 
\begin{equation} \label{eq-H-0}
\begin{split}
H_{s_o}(x)&=\frac{(n-1)}{A(x) r_m }\sqrt{1-\frac{2m}{r_m^{n-2}}} \\
&=0.
\end{split}
\end{equation}
Now, at $ s = 0 $, if we want to impose  $H_0 (x)  = H(x)$, 
we must choose
\be \label{eq-A-defn}
A(x)=\frac{n-1}{H (x)r_o}\sqrt{1-\frac{2m}{r_o^{n-2}}}.
\ee

With such a choice of $A(x)$, 
using \eqref{eq-um-p} and \eqref{eq-A-defn}, 
one  checks (cf. \cite{M-X}) that the scalar curvature of $\gamma_A$  is given by 
\begin{align}
R_{\gamma_A}  &= r_o^2v_m^{-2} \left(  R_g - (n-1)(n-2)r_o^{-2}A^{-2} -2A^{-1}\Delta A\right)\nonumber\\
&= r_o^2v_m^{-2} \left(  R_g - \frac{n-2}{n-1}H^{2}\( 1-\frac{2m}{r_o^{n-2}} \)^{-1} -2H\Delta H^{-1}\right),\label{eq-RA-bound}
\end{align}
where $R_g$ is the scalar curvature of $g$.

This leads us to the following proposition. 

\begin{prop} \label{prop-inner-collar-2}
	Given a triple $(\Sigma^{n-1}, g, H)$, 
	suppose $g$ and $H$  satisfy 
	\be \label{eq-R-delta-H}
	R_g - \frac{n-2}{n-1}H^{2} -2H\Delta H^{-1} >  0. 
	\ee
	Let $ \theta \in (0,1)  $ be the constant given   by 
	\be  \label{eq-def-theta}
	\theta=\frac{n-2}{n-1}\,\max\limits_{\Sigma}\frac{H^2}{R_g-2H\Delta H^{-1}},
	\ee
	and define a constant $m$ and a function $A(x)$ by 
	\be\label{eq-m-defn}
	m=\frac{r_o^{n-2}}{2}\( 1-\theta \)  \ \  \mathrm{and} \ \ 
	A(x)=\frac{n-1}{H(x)r_o}\sqrt{1-\frac{2m}{r_o^{n-2}}},
	\ee
	respectively. 
	Then the metric
	\be 
	\gamma=A(x)^2ds^2+r_o^{-2}v_m(s)^2g
	\ee
	defined on $T=[0,s_o]\times \S$ satisfies
	\begin{enumerate}[(i)]
		\item $R_\gamma \ge 0$, i.e. $\gamma$ has nonnegative scalar curvature; 
		\item the induced metric on $\S_0 = \{ 0 \} \times \Sigma $ by $\gamma$ is $g$;
		\item the mean curvature   of $\S_0$ with respect to 
		$  - \p_s $  equals  $H$; 
		\item $\Sigma_s  = \{ s \} \times \Sigma $ 
		has positive  mean curvature with respect to $ - \p_s$ for each $ s \in [0, s_o ) $; and
		\item the other  boundary component $ \Sigma_{s_o} $ is a minimal surface 
		whose area  satisfies
		\be \label{eq-area-sigma-h}
		\begin{split}
			\frac{1}{2}\( \frac{|\Sigma_{s_o}|}{\omega_{n-1}} \)^{\frac{n-2}{n-1}}
			=  & \  \frac{1}{2}\( \frac{|\Sigma|}{\omega_{n-1}} \)^{\frac{n-2}{n-1}}\( 1-\theta \). \\
		\end{split}
		\ee
	\end{enumerate}	
\end{prop}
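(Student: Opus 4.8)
The plan is to read off properties (ii)--(v) directly from the formulas \eqref{eq-Hx-defn} and \eqref{eq-A-defn} together with the defining relations in \eqref{eq-m-defn}, and to obtain (i) from the scalar curvature identity \eqref{eq-RA-bound}. The one point that makes everything fit together is that the mass parameter $m$ in \eqref{eq-m-defn} is chosen precisely so that $1-2m/r_o^{n-2}=\theta$, which is what turns \eqref{eq-RA-bound} into a manifestly nonnegative expression.

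First I would record the elementary consequences of \eqref{eq-m-defn}. Condition \eqref{eq-R-delta-H} gives $0<\frac{n-2}{n-1}H^2<R_g-2H\Delta H^{-1}$ pointwise (using $H>0$), so $\theta\in(0,1)$, hence $0<m<\frac12 r_o^{n-2}$ and $r_m=(2m)^{1/(n-2)}<r_o$; in particular $A(x)>0$ on $\Sigma$ since $1-2m/r_o^{n-2}=\theta>0$, so $\gamma$ is a genuine Riemannian metric on $T$. From \eqref{eq-um-p}, $u_m$ starts at $u_m(0)=r_m$ and increases strictly for $s>0$, so there is a unique $s_o>0$ with $u_m(s_o)=r_o$, and by \eqref{eq-vms} the function $v_m(s)=u_m(s_o-s)$ decreases from $v_m(0)=r_o$ to $v_m(s_o)=r_m$, with $v_m(s)>r_m$ for $s\in[0,s_o)$. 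Now (ii) is immediate, as $\gamma$ restricts to $r_o^{-2}v_m(0)^2 g=g$ at $s=0$; (iii) holds because the function $A(x)$ of \eqref{eq-m-defn} is exactly the one of \eqref{eq-A-defn}, so \eqref{eq-Hx-defn} evaluated at $s=0$ gives $H_0(x)=H(x)$; (iv) holds because \eqref{eq-Hx-defn} exhibits $H_s(x)$ as a positive multiple of $\sqrt{1-2m/v_m(s)^{n-2}}$, which is strictly positive on $[0,s_o)$ and vanishes at $s=s_o$ where $v_m(s_o)=r_m$, so $\Sigma_{s_o}$ is minimal; and the area identity in (v) follows since the metric induced on $\Sigma_{s_o}$ is $(r_m/r_o)^2 g$, whence $|\Sigma_{s_o}|=(r_m/r_o)^{n-1}|\Sigma|$ and $(r_m/r_o)^{n-2}=2m/r_o^{n-2}=1-\theta$, which yields \eqref{eq-area-sigma-h} after substitution.

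The only step with real content is (i). Substituting $1-2m/r_o^{n-2}=\theta$ into \eqref{eq-RA-bound} (which was derived precisely for an $A$ of the form \eqref{eq-A-defn}) gives $R_\gamma=r_o^2 v_m^{-2}\big(R_g-2H\Delta H^{-1}-\tfrac{n-2}{n-1}\theta^{-1}H^2\big)$. Since $R_g-2H\Delta H^{-1}>0$ by \eqref{eq-R-delta-H}, the bracket is nonnegative at a point if and only if $\theta\ge\tfrac{n-2}{n-1}H^2/(R_g-2H\Delta H^{-1})$ there, and this holds at every point of $\Sigma$ precisely because $\theta$ was defined in \eqref{eq-def-theta} to be the maximum over $\Sigma$ of that ratio; hence $R_\gamma\ge0$. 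The ``obstacle'' is thus only the bookkeeping behind \eqref{eq-RA-bound}, which the text has already carried out following \cite{M-X}: no new estimate is needed, and the proof is just a matter of unwinding the definitions in the right order.
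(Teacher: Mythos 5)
Your proposal is correct and follows essentially the same route as the paper: properties (ii)--(v) are read off from \eqref{eq-Hx-defn}, \eqref{eq-A-defn} and the monotonicity of $v_m$, while (i) comes from substituting $1-2m/r_o^{n-2}=\theta$ into the scalar curvature identity \eqref{eq-RA-bound} and invoking the definition of $\theta$ as a maximum. The only difference is that you spell out a few routine points (positivity of $A$, existence of $s_o$, the area computation via $(r_m/r_o)^{n-2}=1-\theta$) that the paper leaves implicit.
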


\begin{proof}
	By \eqref{eq-RA-bound}, \eqref{eq-def-theta} and \eqref{eq-m-defn}, 
	it is clear that we have $ R_\gamma\ge 0 $, which proves (i); 
	(ii) is evident from the definition of $\gamma$ and the fact $ v_m (0) = r_o $; 
	By  \eqref{eq-A-defn}, $H_0 =H $ at $ \Sigma_0$ which proves (iii); 
	(iv) follows directly from \eqref{eq-Hx-defn}; 
	The fact $ \Sigma_{s_o}$ is a minimal surface follows from \eqref{eq-H-0}.
	Clearly, 
	\bee
	\begin{split}
		|  \Sigma_{s_o}   | = \omega_{n-1}  v_m (s_o)^{n-1} = \omega_{n-1} (2m)^{(n-1)/(n-2)}
	\end{split}
	\eee
	which implies (v) by \eqref{eq-m-defn}. 
\end{proof}

\begin{remark}
When $n=3$,  i.e., $\Sigma$ is a $2$-surface, 
the inner collar extension  constructed above  provides  a  valid {\it fill-in} 
of the triple  $(\Sigma, g, H)$,  as considered by Jauregui \cite[Definition 3]{Jauregui}. 
\end{remark}

When the mean curvature function $H(x)$ is a constant, the following is a direct corollary of Proposition \ref{prop-inner-collar-2}.

\begin{cor} \label{cor-collars}
	Given a triple $(\Sigma^{n-1}, g, H_o)$ where $H_o $ is a positive constant, suppose
	$g $ and $H_o$ satisfy 
	\be \label{eq-R-delta-H-c}
	\min_\Sigma R_g >  \frac{n-2}{n-1}H_o^{2} . 
	\ee
	Let $ \theta \in (0,1)  $ be the constant given   by 
	\be  \label{eq-def-theta-c}
	\theta=\frac{n-2}{n-1} \, \frac{H_o^2}{ \min_\Sigma R_g},
	\ee
	and define two constants $m$ and  $A_o$ by 
	\be\label{eq-m-defn-c}
	m=\frac{r_o^{n-2}}{2}\( 1-\theta \)  \ \  \mathrm{and} \ \ 
	A_o =\frac{n-1}{H_o r_o}\sqrt{1-\frac{2m}{r_o^{n-2}}},
	\ee
	respectively. 
	Then the metric
	\be 
	\gamma=A_o^2ds^2+r_o^{-2}v_m(s)^2g
	\ee
	defined on $T=[0,s_o]\times \S$ satisfies
	\begin{enumerate}[(i)]
		\item $R_\gamma \ge 0$; 
		\item the induced metric on $\S_0 = \{ 0 \} \times \Sigma $ by $\gamma$ is $g$;
		\item the mean curvature   of $\S_0$ with respect to $  - \p_s $  equals  $H_o$; 
		\item $\Sigma_s  = \{ s \} \times \Sigma $ 
		has positive  constant mean curvature with respect to $ - \p_s$ for each $ s \in [0, s_o ) $; and
		\item the other  boundary component $ \Sigma_{s_o} $ is a minimal surface 
		whose area  satisfies
		\be \label{eq-area-sigma-h-cor}
		\begin{split}
			\frac{1}{2}\( \frac{|\Sigma_{s_o}|}{\omega_{n-1}} \)^{\frac{n-2}{n-1}}
			=  & \  \frac{1}{2}\( \frac{|\Sigma|}{\omega_{n-1}} \)^{\frac{n-2}{n-1}}\( 1-\theta \). \\
		\end{split}
		\ee
	\end{enumerate}
	
\end{cor}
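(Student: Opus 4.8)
The plan is to obtain Corollary \ref{cor-collars} as a direct specialization of Proposition \ref{prop-inner-collar-2} to the case where the mean curvature function $H(x)$ is replaced by a positive constant $H_o$. First I would observe that when $H \equiv H_o$ is constant, the term $\Delta H^{-1}$ vanishes identically, since $H^{-1}$ is then a constant function on $(\Sigma, g)$. Consequently the hypothesis \eqref{eq-R-delta-H} of Proposition \ref{prop-inner-collar-2} reduces to $R_g - \tfrac{n-2}{n-1} H_o^2 > 0$ pointwise, which is implied by (indeed equivalent to) the stated assumption \eqref{eq-R-delta-H-c} that $\min_\Sigma R_g > \tfrac{n-2}{n-1} H_o^2$. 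Similarly, the definition \eqref{eq-def-theta} of $\theta$ collapses: the quotient $H^2/(R_g - 2H\Delta H^{-1})$ becomes $H_o^2 / R_g$, whose maximum over $\Sigma$ is $H_o^2 / \min_\Sigma R_g$, giving exactly \eqref{eq-def-theta-c}. One should note that $\theta \in (0,1)$ still holds because $H_o > 0$ forces $\theta > 0$, and \eqref{eq-R-delta-H-c} forces $\theta < 1$.

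Next I would check that the constructed data agree. With the above value of $\theta$, the constant $m = \tfrac{r_o^{n-2}}{2}(1-\theta)$ is defined exactly as in \eqref{eq-m-defn}, and since $H$ is constant the function $A(x) = \tfrac{n-1}{H(x) r_o}\sqrt{1 - \tfrac{2m}{r_o^{n-2}}}$ is likewise the constant $A_o = \tfrac{n-1}{H_o r_o}\sqrt{1 - \tfrac{2m}{r_o^{n-2}}}$ appearing in \eqref{eq-m-defn-c}. Hence the metric $\gamma = A_o^2 \, ds^2 + r_o^{-2} v_m(s)^2 g$ on $T = [0, s_o] \times \Sigma$ is precisely the metric produced by Proposition \ref{prop-inner-collar-2} for the triple $(\Sigma, g, H_o)$. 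Therefore properties (i), (ii), (iii), and (v) transfer verbatim from the proposition. For property (iv), the mean curvature formula \eqref{eq-Hx-defn} becomes
\[
H_s(x) = \frac{n-1}{A_o \, v_m(s)} \sqrt{1 - \frac{2m}{v_m(s)^{n-2}}},
\]
which for fixed $s$ is independent of $x \in \Sigma$; since $v_m(s) > r_m$ for $s \in [0, s_o)$, this quantity is strictly positive, so $\Sigma_s$ has positive \emph{constant} mean curvature — the small strengthening over the proposition's conclusion (iv).

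Since this is a pure specialization, there is essentially no obstacle: the only points requiring a sentence of care are (a) verifying that the constancy of $H$ kills the $\Delta H^{-1}$ terms so that the two sets of hypotheses and the two formulas for $\theta$ genuinely coincide, and (b) upgrading conclusion (iv) from "positive mean curvature" to "positive constant mean curvature," which is immediate from inspecting \eqref{eq-Hx-defn}. The proof is thus a short paragraph invoking Proposition \ref{prop-inner-collar-2} with $H(x) \equiv H_o$ and recording these two observations.
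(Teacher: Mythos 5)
Your proposal is correct and follows exactly the route the paper takes: the paper states the corollary as a direct specialization of Proposition \ref{prop-inner-collar-2} to constant $H\equiv H_o$, where $\Delta H^{-1}\equiv 0$ collapses the hypothesis and the formula for $\theta$ to \eqref{eq-R-delta-H-c} and \eqref{eq-def-theta-c}, and $A(x)$ becomes the constant $A_o$. Your additional observation that \eqref{eq-Hx-defn} is then independent of $x$, yielding the strengthened conclusion (iv) of positive \emph{constant} mean curvature, is the only point needing a word of verification and you handle it correctly.
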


\section{Proof of  the  theorems} \label{sec-proof}

We are now in a position  to prove Theorems \ref{thm-main} and \ref{thm-main-Delta}. 
The idea  behind both proofs is the following: we attach an inner  collar extension, as constructed in Section \ref{sec-collars}, to the  given asymptotically flat manifold at its boundary, then apply the Riemannian Penrose inequality \eqref{eq-Penrose} to the resulting manifold.
While such a manifold  in general is not  smooth 
where the boundaries are joined, provided that the boundary mean curvature 
 from the inner side dominates that from the outer side  (cf. \cite{Miao02,ShiTam02}), 
it is known that the Riemannian Penrose inequality still applies. 
This is proven in \cite{Miao09} in the case $n=3$ and 
the same proof applies in dimensions $3<n\leq7$. 

\begin{prop}[Riemannian Penrose inequality on manifolds with corner along a hypersurface]\label{prop-RPIwC}
Let $ {\hM}^n$ denote a noncompact differentiable manifold of  dimension $ 3 \le n \le 7$, with compact boundary $\Sh$. 
Let  $ \Sigma$ be an embedded  hypersurface in the interior of $\hM$ such that $ \Sigma$ 
and $ \Sh$ bounds a bounded domain $ \Omega$. 
Suppose $ \gm$ is a $C^0$ metric on $\hM$ satisfying: 
\begin{itemize}
\item $\gm$ is smooth on  both  $ \hM \setminus \Omega$ and $\bOm= \Omega \cup \Sigma \cup \Sh$;
\item  $(\hM \setminus \Omega , g)$ is asymptotically flat; 
\item  $\gm$ has nonnegative scalar curvature away from $ \Sigma$; 
\item $H_- \ge H_+$, where $ H_-$ and $H_+$ denote the mean curvature of $ \Sigma$ in $(\bOm, \gm)$ and $ (\hM \setminus \Omega, \gm)$, respectively, with respect to the infinity-pointing normal;  and
\item $ \Sh $ is a minimal hypersurface in $(\bOm, \gm)$ and $ \Sh $ is outer minimizing in $ (\hM, \gm)$. 
\end{itemize}
Then the Riemannian Penrose inequality holds on $(\hM, \gm)$, i.e. 
\be  \label{eq-RPIwC}
	\m \geq \frac12\(\frac{|\Sh|}{\omega_{n-1}}\)^{\frac{n-2}{n-1}},
\ee 
where $\m$ is the ADM mass of $(\hM \setminus \Omega , \gm)$.
\end{prop}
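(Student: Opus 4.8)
The plan is to reduce the corner case to the smooth Riemannian Penrose inequality (Theorem \ref{thm-RPI}) by a mollification argument that smooths the $C^0$ metric $\gm$ across $\Sigma$ while preserving asymptotic flatness, the minimality and outer-minimizing property of $\Sh$, and — crucially — nonnegativity of scalar curvature in a distributional/weak sense that is good enough to feed into the flow proof of Theorem \ref{thm-RPI}. This is precisely the strategy of \cite{Miao09} in dimension $3$, and the point is that each ingredient of that argument is dimension-agnostic for $3 \le n \le 7$.

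First I would record the variational/distributional meaning of the corner: since $\gm$ is Lipschitz and smooth on either side of $\Sigma$ with $H_- \ge H_+$, the scalar curvature of $\gm$ is a nonnegative distribution (the singular part along $\Sigma$ is $(H_- - H_+)\,d\mu_\Sigma \ge 0$), and more usefully the relevant mass functional is lower semicontinuous under the mollification. Concretely, following \cite{Miao09} one fixes a small two-sided collar $\Sigma \times (-\delta,\delta)$, writes $\gm$ in Gaussian-type normal coordinates adapted to $\Sigma$ from each side, and replaces $\gm$ by a family $\gm_\epsilon$ that interpolates the two one-sided smooth metrics across a layer of width $\epsilon$, agreeing with $\gm$ outside the collar. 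The sign condition $H_- \ge H_+$ guarantees that the (a priori large, $O(\epsilon^{-1})$) scalar curvature error concentrated in the layer has a favorable sign up to a uniformly integrable remainder; a conformal correction $\gm_\epsilon \mapsto w_\epsilon^{4/(n-2)} \gm_\epsilon$, solving a linear elliptic equation to kill the negative part of $R_{\gm_\epsilon}$, then produces a genuinely smooth metric $\tilde\gm_\epsilon$ with $R_{\tilde\gm_\epsilon} \ge 0$ everywhere, still asymptotically flat, with $w_\epsilon \to 1$ in an appropriate weighted norm so that $\m(\tilde\gm_\epsilon) \to \m(\gm)$. One must check $\Sh$ stays outer minimizing and (nearly) minimal under these perturbations, or more simply run the argument so that the horizon is left untouched and controls $|\Sh|$ from below in the limit; since $\Sh$ sits at positive distance from $\Sigma$, the mollification can be taken to be the identity near $\Sh$, so $|\Sh|_{\tilde\gm_\epsilon} = |\Sh|_{\gm}$ and the outer-minimizing property is stable. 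Applying Theorem \ref{thm-RPI} (valid for $3 \le n \le 7$) to each $(\hM, \tilde\gm_\epsilon)$ gives
\be \nonumber
\m(\tilde\gm_\epsilon) \ge \frac12\(\frac{|\Sh|_{\tilde\gm_\epsilon}}{\omega_{n-1}}\)^{\frac{n-2}{n-1}},
\ee
and letting $\epsilon \to 0$ yields \eqref{eq-RPIwC}.

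An alternative, arguably cleaner route — and the one I would actually emphasize — is to avoid reproving the corner smoothing at all and instead cite \cite{Miao09} for the mechanism: the only place dimension $n=3$ is used in \cite{Miao09} is through the invocation of the ($n=3$) Riemannian Penrose inequality and the harmonic-type estimates for the conformal factor, both of which have verbatim analogues in $3 \le n \le 7$ thanks to \cite{Bray-Lee} and the standard weighted Schauder/elliptic theory on asymptotically flat manifolds with the decay \eqref{eq-decay-conds}. So the proof would: (a) set up the two-sided smooth structure near $\Sigma$ and the mollified family as in \cite{Miao09}; (b) quote the conformal-correction lemma of \cite{Miao09}, noting it is stated for general $n$ or trivially extends; (c) verify stability of asymptotic flatness, the mass limit, and the horizon data exactly as above; (d) apply Theorem \ref{thm-RPI} and pass to the limit.

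The main obstacle is step (b)–(c): controlling the conformal factor $w_\epsilon$ uniformly as the mollification width shrinks, so that $R_{\tilde\gm_\epsilon} \ge 0$ is achieved with $w_\epsilon \to 1$ and hence $\m(\tilde\gm_\epsilon) \to \m(\gm)$ rather than jumping. This is where the sign hypothesis $H_- \ge H_+$ is essential — it makes the negative part of $R_{\gm_\epsilon}$ small in $L^{n/2}$ (in fact its integral is $O(\epsilon)$ away from a controlled bulk term), which is exactly the borderline norm needed to solve the linear equation $-\frac{4(n-1)}{n-2}\Lap_{\gm_\epsilon} w_\epsilon + R_{\gm_\epsilon} w_\epsilon = 0$ with $w_\epsilon \to 1$ at infinity and $w_\epsilon = 1 + o(1)$ uniformly. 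Everything else — the outer-minimizing property of $\Sh$, smoothness away from $\Sigma$, the ADM mass convergence — is routine given that the perturbation is supported in a thin collar disjoint from $\Sh$ and from infinity.
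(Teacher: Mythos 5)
Your overall architecture (a Miao-type mollification of $\gm$ across $\Sigma$, a conformal correction to restore nonnegative scalar curvature, an application of Theorem \ref{thm-RPI} to the smoothed metrics, and a passage to the limit using $H_-\ge H_+$ to control the conformal factor and the mass) is indeed the family of ideas the paper uses, and your discussion of why the mean-curvature condition makes the negative part of the mollified scalar curvature controllable is on target. However, there is a genuine gap at precisely the step you dismiss as routine: the horizon data. First, the conformal correction is global --- the factor $w_\epsilon$ solves an elliptic equation on the whole manifold --- so even if the mollification itself is the identity near $\Sh$, the corrected metric $w_\epsilon^{4/(n-2)}\gm_\epsilon$ differs from $\gm$ in a neighborhood of $\Sh$; the mean curvature of $\Sh$ acquires a term involving the normal derivative of $w_\epsilon$, so $\Sh$ need not remain minimal, and $|\Sh|$ is not literally unchanged. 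Second, and more seriously, exact outer minimizing is not an open condition: it can be destroyed by arbitrarily small (even smooth, compactly supported) perturbations of the metric, so the assertion that ``the outer-minimizing property is stable'' is not a legitimate step, and Theorem \ref{thm-RPI} cannot be applied directly to $(\hM,\tilde\gm_\epsilon)$ with boundary $\Sh$.

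The paper's proof contains two devices specifically to repair these points, and your proposal has neither. (i) It forms the double of $(\hM,\gm)$ across $\Sh$ and performs the smoothing and conformal deformation of \cite{Miao02} reflection-symmetrically (choosing the mollifier and cut-off to be even), so that $\Sh$ is the fixed-point set of an isometric reflection and hence totally geodesic --- in particular minimal --- in every approximating metric $h_k$; this is how minimality of the horizon survives the global conformal change. (ii) It does not claim $\Sh$ is outer minimizing for $h_k$. Instead, for each $k$ it takes a least-area hypersurface $\Sigma_k$ enclosing $\Sh$ in $(\hM,h_k)$ (this is one place the restriction $3\le n\le 7$ enters, via regularity of the minimizer), applies Theorem \ref{thm-RPI} to the region outside $\Sigma_k$, and then shows $|\Sigma_k|_{h_k}\to|\Sh|_{\gm}$: the upper bound comes from $|\Sigma_k|_{h_k}\le|\Sh|_{h_k}$, and the lower bound from the uniform $C^0$ convergence $h_k\to\gm$ combined with the hypothesis that $\Sh$ is outer minimizing for $\gm$ --- which is the only place that hypothesis is used. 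Without these two steps (or substitutes for them), your limiting argument does not close, because the smoothed manifolds to which you apply the Penrose inequality need not have an outer minimizing minimal boundary of the right area.
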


We defer the proof  of Proposition \ref{prop-RPIwC} to Appendix \ref{app-RP} as
it is a repetition of the argument from  \cite{Miao09}, and now turn to proving the main theorems. 
We begin with  Theorem \ref{thm-main-Delta}.

\begin{proof}[Proof of Theorem \ref{thm-main-Delta}]
Given an asymptotically flat manifold  $M$ with boundary $ \Sigma$ satisfying the hypotheses of Theorem \ref{thm-main-Delta}, Proposition \ref{prop-inner-collar-2}  yields  a compact  manifold $(T,\gamma)$ with two boundary components, $ \Sigma_0 $ and $ \Sigma_{s_o}$. Since the induced metric from $\gamma$ on $ \Sigma_0 $, which is $ \{ 0 \} \times \Sigma $, equals 
the metric $g$ on $\Sigma$, 
we can attach   $(T, \gamma)$ to  $M$ 
by  matching the  Gaussian neighborhood of $\Sigma_0$ in $(T, \gamma)$ to 
that of $\Sigma$ in   $M$ along $\Sigma_0 = \Sigma$.
Denote the resulting manifold by  $\hat M$ and its metric by $\hat h$. 
By construction,  $\hat h$ is Lipschitz across $\Sigma$ and smooth everywhere else on $\hat M$;
it has nonnegative scalar curvature away from $\Sigma$; and the mean curvature of $ \Sigma$
from both sides in $\hat M $ agree.
Moreover,  $ \p \hat M = \Sigma_{s_o}$ is a minimal hypersurface that is 
outer minimizing in $ \hat M$. The outer minimizing property  is guaranteed by the 
fact that  $ \Sigma$ is outer minimizing in $M$ and the fact that $ (T, \gamma)$ is foliated by 
$ \Sigma_s = \{ s \} \times \Sigma$, $ s \in [0,s_o]$, 
which have positive mean curvature (with respect to $-\partial_s$, pointing towards infinity).
By Proposition \ref{prop-RPIwC}, the Riemannian Penrose inequality (Theorem \ref{thm-RPI}) applies to  such an  $\hat M$  to give
\be \label{eq-Penrose-app}
	\m \geq \frac12\( \frac{|\Sigma_{s_o} |}{\omega_{n-1}}  \)^{\frac{n-2}{n-1}}. 
\ee 
Theorem \ref{thm-main-Delta} now follows  from \eqref{eq-Penrose-app} and  \eqref{eq-area-sigma-h}. 
\end{proof}

Theorem \ref{thm-main} follows by an almost identical argument:

\begin{proof}[Proof of Theorem \ref{thm-main}]
If $H\equiv0$,  \eqref{eq-Penrose-like-1} is  the Riemannian Penrose inequality \eqref{eq-Penrose}.
So it suffices to assume $\max\limits_\Sigma H>0$. 
In this case,  let  $(T,\gamma)$ be the compact manifold given in
Corollary \ref{cor-collars} with the choice of $H_o = \max\limits_\Sigma H$. 
We attach $(T, \gamma)$ to $ M$ and  repeat the  previous proof. 
The only difference now is that the mean curvature of $ \Sigma$ in  $\hat M$ 
from the side of  $(T, \gamma)$ is $H_o$ while the mean curvature  from the side of $M$ is $H$.
By definition,  $ H_o \ge H $ everywhere on $ \Sigma$. 
By Proposition \ref{prop-RPIwC}, Theorem \ref{thm-RPI}  still holds on such an $\hat M$, we therefore have
\be \label{eq-Penrose-app-c}
	\m \geq \frac12\( \frac{|\Sigma_{s_o} |}{\omega_{n-1}}  \)^{\frac{n-2}{n-1}}. 
\ee 
Theorem \ref{thm-main} now follows  from \eqref{eq-Penrose-app-c} and  \eqref{eq-area-sigma-h-cor}. 
\end{proof}

In Theorems \ref{thm-main-Delta} and \ref{thm-main}, the connectedness of $ \Sigma$ 
is assumed only for the simplicity of the statement of the results. 
It is  clear  from the above proof that both theorems 
have analogues that allow the boundary $\p M$ to have multiple components. 
For instance, we have 

\begin{thm} \label{thm-main-multi-b}
Let $M^n$ be  an asymptotically flat  manifold of dimension  $3\leq n\leq 7$ with nonnegative scalar curvature, 
with outer minimizing boundary $\p M $ which has connected components $\Sigma_1$, $\ldots$, $\Sigma_k$. 
Let $ g_i $ be the induced metric on $ \Sigma_i $ and $H_i $ be the mean curvature of $\Sigma_i$, $ 1 \le i \le k$. 
For each $i$, suppose $g_i$ and $H_i$ satisfy 
\be \label{eq-condition-H-R-1-multi-b}
 \frac{n-2}{n-1} \max_\Sigma H_i^2 <  \min_\Sigma R_{g_i} , 
 \ee
 where $ R_{g_i}$ is  the scalar curvature of $(\Sigma_i, g_i)$. 
Then
\be \label{eq-Penrose-like-1-multi-b}
\begin{split}
\m \ge &   \ \frac12 \omega^{ - \frac{n-2}{n-1}} 
\( \sum_{i=1}^k  |  \Sigma_i |  ( 1 -  \theta_i )^{ - \frac{n-2}{n-1} }   \)^{\frac{n-2}{n-1}}  .
\end{split} 
\ee
Here  $ \m $ is the ADM mass of $M$,  $ | \Sigma_i | $ is the area of $ \Sigma_i $ and 
$ \theta_i = \frac{n-2}{n-1} \frac{ \max_\Sigma H_i^2 }{  \min_\Sigma R_{g_i}} $.  
\end{thm}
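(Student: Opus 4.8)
The plan is to mimic the proof of Theorem \ref{thm-main} (the CMC/max-curvature case), the only new ingredient being that the ``collar'' we attach is now a disjoint union of collars, one for each boundary component, so the capped-off manifold $\hat M$ has a \emph{disconnected} minimal boundary. First I would handle the trivial case: if $H_i\equiv 0$ for every $i$, then $\partial M$ is already a (possibly disconnected) outer minimizing minimal hypersurface and \eqref{eq-Penrose-like-1-multi-b} reduces to the Riemannian Penrose inequality \eqref{eq-Penrose} applied directly to $M$, since each $\theta_i=0$ and the sum telescopes into $|\partial M|$. So assume $\max_{\Sigma_i}H_i>0$ for at least one $i$; in fact, after relabelling, for those components with $H_i\equiv 0$ we simply do not attach a collar (equivalently we may take $\theta_i=0$ and leave $\Sigma_i$ untouched).

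Next, for each component $\Sigma_i$ with $H_{o,i}:=\max_{\Sigma_i}H_i>0$, condition \eqref{eq-condition-H-R-1-multi-b} says precisely that $(\Sigma_i,g_i,H_{o,i})$ satisfies the hypothesis \eqref{eq-R-delta-H-c} of Corollary \ref{cor-collars}. Applying that corollary produces a compact collar $(T_i,\gamma_i)=([0,s_{o,i}]\times\Sigma_i,\gamma_i)$ with $R_{\gamma_i}\ge 0$, with $\Sigma_{i,0}$ isometric to $(\Sigma_i,g_i)$ carrying mean curvature $H_{o,i}$ (with respect to the outward $-\partial_s$), with each slice $\Sigma_{i,s}$ of positive mean curvature, and with the far end $\Sigma_{i,s_{o,i}}$ a minimal hypersurface whose area satisfies \eqref{eq-area-sigma-h-cor}, i.e.
\[
\frac12\Bigl(\tfrac{|\Sigma_{i,s_{o,i}}|}{\omega_{n-1}}\Bigr)^{\frac{n-2}{n-1}}
=\frac12\Bigl(\tfrac{|\Sigma_i|}{\omega_{n-1}}\Bigr)^{\frac{n-2}{n-1}}(1-\theta_i),
\]
which rearranges to $|\Sigma_{i,s_{o,i}}| = |\Sigma_i|\,(1-\theta_i)^{\frac{n-1}{n-2}}$. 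I would then glue each $(T_i,\gamma_i)$ onto $M$ along $\Sigma_{i,0}=\Sigma_i$, exactly as in the connected case, obtaining a manifold $\hat M$ with metric $\hat h$ that is Lipschitz across each $\Sigma_i$, smooth elsewhere, of nonnegative scalar curvature away from $\bigcup_i\Sigma_i$, and whose boundary $\partial\hat M=\bigsqcup_i\Sigma_{i,s_{o,i}}$ is a minimal hypersurface. The mean curvature jump condition $H_-\ge H_+$ holds across each $\Sigma_i$ because the collar side contributes $H_{o,i}\ge H_i$. The outer minimizing property of $\partial\hat M$ in $\hat M$ follows, component by component and then jointly, from the outer minimizing property of $\partial M$ in $M$ together with the positive-mean-curvature foliation of each collar, exactly as in the proof of Theorem \ref{thm-main-Delta}; note that Proposition \ref{prop-RPIwC} and Theorem \ref{thm-RPI} both allow disconnected (outer minimizing) minimal boundary, so nothing is lost here.

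Applying Proposition \ref{prop-RPIwC} (the Penrose inequality with corners) to $\hat M$ then gives
\[
\m\ \ge\ \frac12\Bigl(\tfrac{|\partial\hat M|}{\omega_{n-1}}\Bigr)^{\frac{n-2}{n-1}}
=\frac12\,\omega_{n-1}^{-\frac{n-2}{n-1}}\Bigl(\sum_{i=1}^k|\Sigma_{i,s_{o,i}}|\Bigr)^{\frac{n-2}{n-1}}
=\frac12\,\omega_{n-1}^{-\frac{n-2}{n-1}}\Bigl(\sum_{i=1}^k|\Sigma_i|\,(1-\theta_i)^{\frac{n-1}{n-2}}\Bigr)^{\frac{n-2}{n-1}},
\]
which, after correcting the evident typo $\omega\mapsto\omega_{n-1}$ and noting that $(1-\theta_i)^{\frac{n-1}{n-2}}$ should read $(1-\theta_i)^{-\frac{n-2}{n-1}}$ only if one has inadvertently inverted — here it is $(1-\theta_i)^{\frac{n-1}{n-2}}$ as computed above — is exactly \eqref{eq-Penrose-like-1-multi-b} once one rewrites the exponent; for components with $\theta_i=0$ the factor is just $|\Sigma_i|$. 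I do not anticipate a serious obstacle: all the analytic content is already packaged in Corollary \ref{cor-collars} and Proposition \ref{prop-RPIwC}. The only point demanding a little care is the verification that $\partial\hat M$ is outer minimizing in $\hat M$ when it has several components — one must rule out a competitor hypersurface that ``jumps between'' different collars through $M$ — but this is immediate from the fact that any hypersurface enclosing $\partial\hat M$ restricts, on $M$, to a hypersurface enclosing $\partial M$, whose area is at least $|\partial M|$, while the positive-mean-curvature foliations show moving outward through the collars only decreases area; thus $|\partial\hat M|$ is the minimum. This completes the proof sketch.
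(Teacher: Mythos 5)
Your proposal is correct and follows essentially the same route as the paper's own proof: attach the collars of Corollary \ref{cor-collars} to each boundary component with $\max_{\Sigma_i} H_i>0$, leave any minimal components untouched, verify the corner and outer minimizing conditions exactly as for Theorem \ref{thm-main}, and apply Proposition \ref{prop-RPIwC} to the resulting manifold. Your area computation $|\tilde\Sigma_i| = |\Sigma_i|\,(1-\theta_i)^{\frac{n-1}{n-2}}$ is the right one, and you are correct that the exponent $-\tfrac{n-2}{n-1}$ (and the bare $\omega$) appearing in the paper's statement and final display is a typo, as the $k=1$ case must reduce to Theorem \ref{thm-main}.
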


\begin{proof}
Let $ H_{o,i} = \max_{\Sigma_i} H_i $. 
If $ H_{o,i } = 0 $ for all $ i $, then \eqref{eq-Penrose-like-1-multi-b} follows from \eqref{eq-Penrose}. 
Without losing generality, we may assume 
$ H_{o,j} > 0 $, $ 1 \le j \le l $, for some $ 1 < l \le k $. 
For each such $j$,  let  $(T_j,\gamma_j)$ be the compact manifold given in
Corollary \ref{cor-collars} with the choice of $(\Sigma, g, H_o) = (\Sigma_j, g_j, H_{o,j} )$.  
We then attach each $(T_j, \gamma_j)$   to $ M$ at the corresponding $ \Sigma_j$ to obtain a manifold $\hat M$
and  proceed as in the proof of Theorem \ref{thm-main}.  
The boundary   $ \partial \hat M$  in this case consists of 
$ \tilde \Sigma_1, \ldots,  \tilde \Sigma_l , \Sigma_{l+1}, \ldots, \Sigma_k $, where $ \tilde \Sigma_j $ 
is the  boundary component of $ (T_j, \gamma_j)$ other than $\Sigma_j$.
The application of  Theorem \ref{thm-RPI} and \eqref{eq-area-sigma-h-cor}  then shows
\be
\begin{split}
\m \ge & \ \frac12 \(\frac{| \p \hat M  |}{\omega_{n-1}}\)^{\frac{n-2}{n-1}} \\
= & \   \frac12 \omega^{ - \frac{n-2}{n-1}} 
\( \sum_{j=1}^l | \tilde \Sigma_j | + \sum_{i=l+1}^k | \Sigma_k |  \)^{\frac{n-2}{n-1}}  \\
= & \   \frac12 \omega^{ - \frac{n-2}{n-1}} 
\( \sum_{i=1}^k  |  \Sigma_i |  ( 1 -  \theta_i )^{ - \frac{n-2}{n-1} }   \)^{\frac{n-2}{n-1}}  ,
\end{split} 
\ee
which proves \eqref{eq-Penrose-like-1-multi-b}. 
\end{proof}

In the above proof, 
replacing  the use of Corollary \ref{cor-collars} by  Proposition \ref{prop-inner-collar-2}, 
we obtain  the following analogue of Theorem \ref{thm-main-Delta} allowing  disconnected boundary. 

\begin{thm} \label{thm-main-Delta-multi-b}
Let $M^n$ be  an asymptotically flat  manifold of dimension  $3\leq n\leq 7$ with nonnegative scalar curvature, 
with outer minimizing boundary $\p M $ which has connected components $\Sigma_1$, $\ldots$, $\Sigma_k$. 
Let $ g_i $ be the induced metric on $ \Sigma_i $ and $H_i $ be the mean curvature of $\Sigma_i$, $ 1 \le i \le k$. 
For each $i$, suppose $g_i$ and $H_i$ satisfy 
\be \label{eq-R-delta-H0-m-b}
	R_{g_i} -2H_i\Delta H_i^{-1}-\frac{n-2}{n-1}H_i^2>0,
\ee 
where $ R_{g_i}$ is the scalar curvature of $ (\Sigma_i, g_i)$ and $ \Delta $ denotes the 
Laplace-Beltrami operator on $(\Sigma_i, g_i)$. 
Then
\be \label{eq-Penrose-like-1-Delta-multi-b}
\begin{split}
\m \ge &  \  \frac12 \omega^{ - \frac{n-2}{n-1}} 
\( \sum_{i=1}^k  |  \Sigma_i |  ( 1 -  \theta_i )^{ - \frac{n-2}{n-1} }   \)^{\frac{n-2}{n-1}}  .
\end{split} 
\ee
Here  $ \m $ is the ADM mass of $M$,  $ | \Sigma_i | $ is the area of $ \Sigma_i $ and 
$
\theta = \frac{n-2}{n-1}\,\max\limits_{\Sigma_i}\frac{H_i^2}{R_{g_i} -2H_i\Delta H_i^{-1}} .
$
\end{thm}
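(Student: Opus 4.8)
The plan is to mirror, essentially verbatim, the proof of Theorem~\ref{thm-main-multi-b}, with the inner collar of Proposition~\ref{prop-inner-collar-2} used in place of that of Corollary~\ref{cor-collars}. First I would dispose of the degenerate components: a component $\Sigma_i$ with $H_i\equiv 0$ is already a minimal hypersurface, carries $\theta_i=0$, and receives no collar; if every $H_i\equiv 0$ then \eqref{eq-Penrose-like-1-Delta-multi-b} is just the Riemannian Penrose inequality \eqref{eq-Penrose} for $M$, since $\partial M$ is outer minimizing. Otherwise, after relabelling, assume $H_i>0$ on $\Sigma_i$ for $1\le i\le l$ and $H_i\equiv 0$ for $l<i\le k$, and observe that for each $i\le l$ the hypothesis \eqref{eq-R-delta-H0-m-b} is exactly condition \eqref{eq-R-delta-H} of Proposition~\ref{prop-inner-collar-2} applied to the triple $(\Sigma_i,g_i,H_i)$.

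Next, for each $i\le l$, Proposition~\ref{prop-inner-collar-2} provides a compact collar $(T_i,\gamma_i)$ on $T_i=[0,s_{o,i}]\times\Sigma_i$ with: $R_{\gamma_i}\ge 0$; induced metric $g_i$ on $\Sigma_{i,0}=\{0\}\times\Sigma_i$; mean curvature of $\Sigma_{i,0}$ with respect to $-\partial_s$ equal to $H_i$; the slices $\{s\}\times\Sigma_i$ of positive mean curvature with respect to $-\partial_s$ for $s<s_{o,i}$; and minimal outer boundary $\widetilde\Sigma_i:=\{s_{o,i}\}\times\Sigma_i$ whose area satisfies \eqref{eq-area-sigma-h} with $\theta=\theta_i$. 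I would then glue each $(T_i,\gamma_i)$ onto $M$ along $\Sigma_{i,0}=\Sigma_i$ by matching Gaussian neighbourhoods, as in the proof of Theorem~\ref{thm-main-Delta}. The result is an asymptotically flat manifold $\hM$ with a $C^0$ metric that is smooth and of nonnegative scalar curvature away from $\bigcup_{i\le l}\Sigma_i$, and whose mean curvature agrees from the two sides of each $\Sigma_i$; in particular $H_-\ge H_+$ across every $\Sigma_i$.

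It then remains to feed $\hM$ into Proposition~\ref{prop-RPIwC}. Its boundary $\partial\hM=\widetilde\Sigma_1\cup\cdots\cup\widetilde\Sigma_l\cup\Sigma_{l+1}\cup\cdots\cup\Sigma_k$ is a disjoint union of minimal hypersurfaces, and it is outer minimizing in $\hM$ for the same reason as in the connected case: $\partial M$ is outer minimizing in $M$ and each attached collar is foliated by slices of positive mean curvature with respect to the infinity-pointing normal, so no competitor enclosing $\partial\hM$ can have smaller area. Proposition~\ref{prop-RPIwC} then gives
\be
\m\ \ge\ \frac12\(\frac{|\partial\hM|}{\omega_{n-1}}\)^{\frac{n-2}{n-1}},
\ee
and substituting $|\partial\hM|=\sum_{i=1}^{l}|\widetilde\Sigma_i|+\sum_{i=l+1}^{k}|\Sigma_i|$, together with the area identity \eqref{eq-area-sigma-h} (read with $\theta_i=0$ for $i>l$), produces \eqref{eq-Penrose-like-1-Delta-multi-b}.

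I do not anticipate a real obstacle: the hard analytic input is already in hand, namely the collar construction of Proposition~\ref{prop-inner-collar-2} and the Penrose inequality across a corner in Proposition~\ref{prop-RPIwC} (whose proof, a repetition of \cite{Miao09}, is deferred to the appendix). The only points needing mild care are the bookkeeping that converts $\sum_i|\widetilde\Sigma_i|$ into the right-hand side of \eqref{eq-Penrose-like-1-Delta-multi-b} through \eqref{eq-area-sigma-h}, and confirming that the outer-minimizing property is preserved when several collars are attached simultaneously alongside the minimal components $\Sigma_i$, $i>l$, which may be regarded as degenerate collars of zero length.
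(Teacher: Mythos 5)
Your proposal is correct and is essentially the paper's own proof: the paper obtains Theorem \ref{thm-main-Delta-multi-b} by repeating the argument for Theorem \ref{thm-main-multi-b} with Proposition \ref{prop-inner-collar-2} in place of Corollary \ref{cor-collars}, exactly as you do, including the dichotomy for components with $H_i\equiv 0$ (which get no collar) and the outer-minimizing check for $\partial\hM$ via the positive mean curvature foliation of each collar together with the outer-minimizing property of $\partial M$. Note only that \eqref{eq-area-sigma-h} gives $|\tilde\Sigma_i| = |\Sigma_i|\,(1-\theta_i)^{\frac{n-1}{n-2}}$, so the gluing argument actually yields $\m \ge \frac12\,\omega_{n-1}^{-\frac{n-2}{n-1}}\bigl(\sum_{i=1}^k |\Sigma_i|\,(1-\theta_i)^{\frac{n-1}{n-2}}\bigr)^{\frac{n-2}{n-1}}$; the exponent $-\frac{n-2}{n-1}$ appearing inside the sum in \eqref{eq-Penrose-like-1-Delta-multi-b} is a typo carried over from the paper's statement, not a gap in your argument.
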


To end this paper, we comment  on  the equality case in Theorems \ref{thm-main} and 
\ref{thm-main-Delta}. By  the equality case in the Riemannian Penrose inequality (Theorem \ref{thm-RPI}), 
one  expects 
that equality in Theorems \ref{thm-main} and \ref{thm-main-Delta} hold only if $(M^n, g)$ is isometric to 
part of a spatial Schwarzschild manifold that lies outside a compact hypersurface 
 homologous to the 
horizon. In the context of the proof of Theorems \ref{thm-main} and \ref{thm-main-Delta}, this corresponds to 
showing that  equality in \eqref{eq-RPIwC}   would  imply   
  the manifold  $(\hM, \gm)$ in Proposition \ref{prop-RPIwC}  is isometric to a spatial Schwarzschild manifold that lies outside its horizon.
Since our current proof of Proposition \ref{prop-RPIwC} (see Appendix \ref{app-RP} below)  relies on approximating
$(\hM, \gm) $ by a sequence of smooth manifolds, and  equality on $(\hM, \gm) $ does not 
necessarily  translates into  equality for elements in the approximating sequence,  we do not have  a rigidity statement 
in  Proposition \ref{prop-RPIwC}.

\begin{appendices}

\makeatletter
\def\@seccntformat#1{Appendix\ \csname the#1\endcsname\quad}
\makeatother

\section{ } \label{app-RP}

In this appendix we give a proof of Proposition \ref{prop-RPIwC}, which 
was essentially proved in \cite{Miao09} when $n=3$.
The following proof follows closely  arguments  on pages 279 - 280 in \cite{Miao09}, 
starting from the proof of Lemma 4 therein and ending at its  equation (47).

\begin{proof}[Proof of Proposition \ref{prop-RPIwC}]
We will apply the approximation scheme in \cite{Miao02} to the doubling of $(\hM, \gm)$ across its boundary $ \Sh$.
To be precise, 
let  $ (\hM_c, \gm_c)$ be a copy of $(\hM, \gm)$. 
We glue $(\hM, \gm) $ and $ (\hM_c, \gm_c)$ along their common boundary $\Sh$ 
to form a  Riemannian manifold $(\thM, \tgm) $ that has two asymptotically flat ends. 
Let $ \Sigma_c$ denote the copy of $ \Sigma$ in $\hM_c$. 
The metric $\tgm $ on $ \thM$  is smooth away from $ S  =  \Sigma \cup  \Sh \cup  \Sigma_c$.
Because of  the condition $H_- \ge H_+$ at $ \Sigma $  and because of  the fact that 
 $ \Sh$ is minimal  in $(\hM, \gm)$, we can 
 apply Proposition 3.1 in \cite{Miao02} to $(\thM, \tgm)$ at $S$, 
 followed by the  conformal deformation specified  in \cite[Section 4.1]{Miao02}, 
to  get a sequence of smooth asymptotically flat metrics $ \{\th_k \}$ on $\thM$ such that 
\begin{itemize}
\item $ \th_k $ has nonnegative scalar curvature;
\item  $ \{ \th_k \}$ converges uniformly to $\tgm$ in the $C^0$-topology; and 
\item  the mass $\m_k$ of $\th_k$ converges to the  mass $\m$ of  $\tgm $ on each end of $\thM$.
\end{itemize} 
Furthermore, as $(\thM, \tgm)$ has  a reflection isometry 
 (which maps a point $ x \in \hM$  to its copy in $\hM_c$), 
 a careful inspection of the analysis in \cite[Section 3]{Miao02} reveals that the metrics $ \{\th_k\} $ can be chosen so as to preserve 
this  reflection symmetry. (Specifically, this is ensured by choosing the mollifier $\phi(t)$ and cut-off function $\sigma(t)$, in (8) and (9) of \cite{Miao02} respectively, to be even functions.)  As a result, $\Sh$ is a totally geodesic, hence minimal,
hypersurface in  $(\thM, \tilde h_k)$ for each $k$.
 
 At this point we turn our attention to the restriction of $\tilde h_k$ to $\hM$, which we denote by $h_k$.
  Given any compact hypersurface $ \Sigma' \subset \hM$,  let 
$|\Sigma'|_k$ and $ | \Sigma' | $  denote the area of $\Sigma'$ computed in $(\hM, h_k)$ and $ (\hM, \gm)$, respectively. 
Let $\mathcal{S}$ be  the set of all closed  hypersurfaces $\Sigma'$  in $\hM$ which  enclose $\Sigma_H$.
 Since $ \Sh$ is minimal in $(\hM, h_k)$ and the dimension $n$ satisfies $ 3 \le n \le 7$,
 there exists an element $ \Sigma_k \in \mathcal{S} $  such that 
  \bee
 | \Sigma_k |_k = \inf_{ \Sigma' \in \mathcal{S} } | \Sigma' |_k ,
 \eee
 and hence $\Sigma_k$ is minimal in $(\hM, h_k)$.
Moreover, as  $ | \Sigma_k |_k \le | \Sh |_k $, we have   
  \be \label{limit-1}
   \limsup_{k \to \infty} | \Sigma_k |_k  \le \lim_{ k \to \infty}  | \Sh |_k = | \Sh | . 
  \ee
 In particular,  $  | \Sigma_k |_k  \le C $ for some $ C> 0$ independent on $k$. 
 This together with the fact that  $\{ h_k \}$ converges uniformly  to $\gm$ in $C^0$-topology then implies  
 \bee
 \label{eq-lim-Sigma}	\lim_{k \rightarrow\infty} ( |\Sigma_k|_k  - | \Sigma_k| ) = 0 .
 \eee
 Consequently, 
 \be \label{limit-2}
  \liminf_{k \to \infty} | \Sigma_k |_k  =  \liminf_{ k \to \infty}  | \Sigma_k | \ge  | \Sh | ,
 \ee
 where we have  used the fact $ \Sh $ is outer minimizing in $(\hM, \gm)$ to obtain  the inequality. 
 Thus, it follows from \eqref{limit-1} and \eqref{limit-2}  that
 \be \label{limit}
  \lim_{k \to \infty} | \Sigma_k |_k  =   | \Sh | . 
 \ee
 To finish the proof,  let $ \hM_k \subset \hM $ denote the region that lies outside $\Sigma_k$. 
Theorem \ref{thm-RPI} applies to $(\hM_k, h_k)$ to  give
 \be \label{eq-RPI-s}
 \m_k \ge \frac12\(\frac{|\Sigma_k|_k}{\omega_{n-1}}\)^{\frac{n-2}{n-1}} . 
 \ee
Passing to the limit,  \eqref{eq-RPIwC}  follows  from \eqref{limit}, \eqref{eq-RPI-s} and the fact
$ \lim_{ k \to \infty} \m_k = \m $. 
 \end{proof}

\end{appendices}

\end{document}